\numberwithin{equation}{section}
\def\eps{\varepsilon}
\def\fl#1{\left\lfloor#1\right\rfloor}
\def\({\left(}
\def\){\right)}
\newcommand{\e}{\ensuremath{\mathbf{e}}}
\newcommand{\cL}{\ensuremath{\mathcal{L}}}
\newcommand{\cN}{\ensuremath{\mathcal{N}}}
\newcommand{\cS}{\ensuremath{\mathcal{S}}}
\newcommand{\fA}{\ensuremath{\mathfrak{A}}}
\newcommand{\fT}{\ensuremath{\mathfrak{T}}}
\newcommand{\RR}{\ensuremath{\mathbb{R}}}
\newtheoremstyle{customthm}
{1em}                    
{1em}                    
{\itshape}               
{}                       
{\scshape}               
{.}                      
{5pt plus 1pt minus 1pt} 
{}                       
\newtheoremstyle{customrem}
{1em}                    
{1em}                    
{}                       
{}                       
{\scshape}               
{.}                      
{5pt plus 1pt minus 1pt} 
{}                       
\theoremstyle{customthm}
\newtheorem{X}{X}[section]
\newtheorem{theorem}[X]{Theorem}
\newtheorem{lemma}[X]{Lemma}
\theoremstyle{customrem}
\renewcommand{\le}{\ensuremath{\leqslant}}
\renewcommand{\ge}{\ensuremath{\geqslant}}
\def\fl#1{\left\lfloor#1\right\rfloor}
\renewcommand{\pod}[1]{\mathchoice
	{\allowbreak \if@display \mkern 5mu\else \mkern 5mu\fi (#1)}
	{\allowbreak \if@display \mkern 5mu\else \mkern 5mu\fi (#1)}
	{\mkern4mu(#1)}
	{\mkern4mu(#1)}
}
\newcommand*{\defeq}{\mathrel{\vcenter{\baselineskip0.5ex \lineskiplimit0pt
			\hbox{\scriptsize.}\hbox{\scriptsize.}}}%
	=}
\DeclareSymbolFont{EUEX}{U}{euex}{m}{n}
\DeclareSymbolFont{euexlargesymbols}{U}{euex}{m}{n}
\DeclareMathSymbol{\intop}{\mathop}{euexlargesymbols}{"52}
\def\int{\intop\nolimits}
\DeclareSymbolFont{euexsymbols}     {U}{euex}{m}{n}
\DeclareMathSymbol{\smallint}{\mathop}{euexsymbols}{"52}
\def\sums{                        
	\@ifnextchar[
	{\sums@i}
	{\ensuremath{\sum}}    
}
\def\sums@i[#1]{
	\@ifnextchar[
	{\sums@ii{#1}}
	{\ensuremath{\sum_{#1}}}
}
\def\sums@ii#1[#2]{
	\@ifnextchar[
	{\sums@iii{#1}{#2}}
	{\ensuremath{\sum_{\substack{#1 \\ #2}}}}
}
\def\sums@iii#1#2[#3]{
	\@ifnextchar[
	{\sums@iv{#1}{#2}{#3}}
	{\ensuremath{\sum_{\substack{#1 \\ #2 \\ #3}}}}
}
\def\sums@iv#1#2#3[#4]{
	\@ifnextchar[
	{\sums@v{#1}{#2}{#3}{#4}}
	{\ensuremath{\sum_{\substack{#1 \\ #2 \\ #3 \\ #4}}}}
}
\def\sums@v#1#2#3#4[#5]{
	{\ensuremath{\sum_{\substack{#1 \\ #2 \\ #3 \\ #4 \\ #5}}}}
}
\def\sumss[#1]{
	\@ifnextchar[
	{\sumss@i[#1]}
	{
		\ifthenelse{\isempty{#1}} 
		{\ensuremath{\sum}}       
		{
			\ifthenelse{\equal{#1}{'}}
			{\ensuremath{\sideset{}{^{\prime}}{\sum}}}
			{\ensuremath{\sideset{}{^{#1}}{\sum}}} 
		}  
	}
}    
\def\sumss@i[#1][#2]{
	\@ifnextchar[
	{\sumss@ii[#1]{#2}}
	{
		\ifthenelse{\isempty{#1}} 
		{\ensuremath{\sum_{#2}}}       
		{
			\ifthenelse{\equal{#1}{'}}
			{\ensuremath{\sideset{}{^{\prime}}{\sum}_{#2}}}
			{\ensuremath{\sideset{}{^{#1}}{\sum}_{#2}}} 
		}  
	}
}
\def\sumss@ii[#1]#2[#3]{
	\@ifnextchar[
	{\sumss@iii[#1]{#2}{#3}}
	{
		\ifthenelse{\isempty{#1}} 
		{\ensuremath{\sum_{\substack{#2 \\ #3}}}}       
		{
			\ifthenelse{\equal{#1}{'}}
			{\ensuremath{\sideset{}{^{\prime}}{\sum}_{\substack{#2 \\ #3}}}}
			{\ensuremath{\sideset{}{^{#1}}{\sum}_{\substack{#2 \\ #3}}}}
		} 
	}
}
\def\sumss@iii[#1]#2#3[#4]{
	\@ifnextchar[
	{\sumss@iv[#1]{#2}{#3}{#4}}
	{
		\ifthenelse{\isempty{#1}} 
		{\ensuremath{\sum_{\substack{#2 \\ #3 \\ #4}}}}       
		{
			\ifthenelse{\equal{#1}{'}}
			{\ensuremath{\sideset{}{^{\prime}}{\sum}_{\substack{#2 \\ #3 \\ #4}}}}
			{\ensuremath{\sideset{}{^{#1}}{\sum}_{\substack{#2 \\ #3 \\ #4}}}} 
		}  
	}
}
\def\sumss@iv[#1]#2#3#4[#5]{
	\@ifnextchar[
	{\sumss@v[#1]{#2}{#3}{#4}{#5}}
	{
		\ifthenelse{\isempty{#1}} 
		{\ensuremath{\sum_{\substack{#2 \\ #3 \\ #4 \\ #5}}}}       
		{
			\ifthenelse{\equal{#1}{'}}
			{\ensuremath{\sideset{}{^{\prime}}{\sum}_{\substack{#2 \\ #3 \\ #4 \\ #5}}}}
			{\ensuremath{\sideset{}{^{#1}}{\sum}_{\substack{#2 \\ #3 \\ #4 \\ #5}}}} 
		}  
	}
}
\def\sumss@v[#1]#2#3#4#5[#6]{
	{\ifthenelse{\isempty{#1}} 
		{\ensuremath{\sum_{\substack{#2 \\ #3 \\ #4 \\ #5 \\ #6 }}}}       
		{
			\ifthenelse{\equal{#1}{'}}
			{\ensuremath{\sideset{}{^{\prime}}{\sum}_{\substack{#2 \\ #3 \\ #4 \\ #5 \\ #6 }}}}
			{\ensuremath{\sideset{}{^{#1}}{\sum}_{\substack{#2 \\ #3 \\ #4 \\ #5 \\ #6 }}}} 
		}  
	}
}
\def\sumsstxt[#1]{
	\@ifnextchar[
	{\sumsstxt@i[#1]}
	{
		\ifthenelse{\isempty{#1}} 
		{\ensuremath{\textstyle\sum}}       
		{
			\ifthenelse{\equal{#1}{'}}
			{\ensuremath{\sideset{}{^{\prime}}{\textstyle\sum}}}
			{\ensuremath{\sideset{}{^{#1}}{\textstyle\sum}}} 
		}  
	}
}    
\def\sumsstxt@i[#1][#2]{
	\@ifnextchar[
	{\sumsstxt@ii[#1]{#2}}
	{
		\ifthenelse{\isempty{#1}} 
		{\ensuremath{\textstyle\sum_{#2}}}       
		{
			\ifthenelse{\equal{#1}{'}}
			{\ensuremath{\sideset{}{^{\prime}}{\textstyle\sum}_{#2}}}
			{\ensuremath{\sideset{}{^{#1}}{\textstyle\sum}_{#2}}} 
		}  
	}
}
\def\sumsstxt@ii[#1]#2[#3]{
	\@ifnextchar[
	{\sumsstxt@iii[#1]{#2}{#3}}
	{
		\ifthenelse{\isempty{#1}} 
		{\ensuremath{\textstyle\sum_{\substack{#2 \\ #3}}}}       
		{
			\ifthenelse{\equal{#1}{'}}
			{\ensuremath{\sideset{}{^{\prime}}{\textstyle\sum}_{\substack{#2 \\ #3}}}}
			{\ensuremath{\sideset{}{^{#1}}{\textstyle\sum}_{\substack{#2 \\ #3}}}}
		} 
	}
}
\def\sumsstxt@iii[#1]#2#3[#4]{
	\@ifnextchar[
	{\sumsstxt@iv[#1]{#2}{#3}{#4}}
	{
		\ifthenelse{\isempty{#1}} 
		{\ensuremath{\textstyle\sum_{\substack{#2 \\ #3 \\ #4}}}}       
		{
			\ifthenelse{\equal{#1}{'}}
			{\ensuremath{\sideset{}{^{\prime}}{\textstyle\sum}_{\substack{#2 \\ #3 \\ #4}}}}
			{\ensuremath{\sideset{}{^{#1}}{\textstyle\sum}_{\substack{#2 \\ #3 \\ #4}}}} 
		}  
	}
}
\def\sumsstxt@iv[#1]#2#3#4[#5]{
	{\ifthenelse{\isempty{#1}} 
		{\ensuremath{\textstyle\sum_{\substack{#2 \\ #3 \\ #4 \\ #5}}}}       
		{
			\ifthenelse{\equal{#1}{'}}
			{\ensuremath{\sideset{}{^{\prime}}{\textstyle\sum}_{\substack{#2 \\ #3 \\ #4 \\ #5}}}}
			{\ensuremath{\sideset{}{^{#1}}{\textstyle\sum}_{\substack{#2 \\ #3 \\ #4 \\ #5}}}} 
		}  
	}
}
\def\prods{              
	\@ifnextchar[
	{\prods@i}
	{\ensuremath{\prod}}    
}
\def\prods@i[#1]{
	\@ifnextchar[
	{\prods@ii{#1}}
	{\ensuremath{\prod_{#1}}}
}
\def\prods@ii#1[#2]{
	\@ifnextchar[
	{\prods@iii{#1}{#2}}
	{\ensuremath{\prod_{\substack{#1 \\ #2}}}}
}
\def\prods@iii#1#2[#3]{
	\@ifnextchar[
	{\prods@iv{#1}{#2}{#3}}
	{\ensuremath{\prod_{\substack{#1 \\ #2 \\ #3}}}}
}
\def\prods@iv#1#2#3[#4]{
	{\ensuremath{\prod_{\substack{#1 \\ #2 \\ #3 \\ #4}}}}
}
\newcommand{\RNum}[1]{\uppercase\expandafter{\romannumeral #1\relax}}
\title[The Piatetski-Shapiro prime number theorem]
{The Piatetski-Shapiro prime number theorem}
\author[Lingyu Guo]{Lingyu Guo}
\address{School of Mathematics and Statistics, Xi'an Jiaotong University, Xi'an,Shaanxi,China.}
\email{guo.lingyu@foxmail.com}
\author[Victor Zhenyu Guo]{Victor Zhenyu Guo}
\address{School of Mathematics and Statistics, Xi'an Jiaotong University, Xi'an, Shaanxi, China.}
\email{vzguo@foxmail.com; guozyv@xjtu.edu.cn}
\author[Li Lu]{Li Lu}
\address{School of Mathematics and Statistics, Xi'an Jiaotong University, Xi'an,Shaanxi,China.}
\email{lilu\_math@foxmail.com}
\date{\today}
\begin{document}
	
\begin{abstract}
The Piatetski-Shapiro sequences are of the form $\mathcal{N}_{c} := (\lfloor n^{c} \rfloor)_{n=1}^\infty$, where $\lfloor \cdot \rfloor$ is the integer part. It is expected that there are infinitely many primes in a Piatetski-Shapiro sequence for $c \in (1,2)$. In this article, we prove there are infinitely many Piatetski-Shapiro prime numbers for $1 < c < 1.1612\dots$ with an asymptotic formula. As a key idea, we prove a new bound for related type $I$ sum. 
\end{abstract}
	
\maketitle
	
\begin{quote}
	\textbf{MSC Numbers:} 11B83; 11L07; 11N05.
\end{quote}

\begin{quote}
	\textbf{Keywords:} Piatetski-Shapiro sequence; exponential sums; primes. 
\end{quote}
	
\newcommand{\tind}[1]{\ensuremath{\widetilde{\mathbf{1}}_{#1}}} 
	

\section{Introduction}
	
The Piatetski-Shapiro sequences are sequences of the form
$$
\cN_c \defeq (\lfloor n^{c} \rfloor)_{n=1}^\infty,
$$
where $\fl{\cdot}$ is the integer part. Piatetski-Shapiro~\cite{PS} proved the Piatetski-Shapiro prime number theorem stating that for $ 1 < c < 12/11$ the counting function
$$
\pi_c(x) \defeq \# \big\{\text{\rm prime~} p\le x : p \in \cN_c \big\}
$$
satisfies the asymptotic relation
\begin{equation}
	\label{eq:PS}
	\pi_c(x) = (1 + o(1)) \frac{x^{1/c}}{\log x} \qquad \text{~\rm as } x \to \infty.
\end{equation}
The asymptotic relation is expected to hold for all values of $1 < c < 2$ and was proved by Leitmann and Wolke \cite{LeWo} for almost all $c \in (1,2)$ in the sense of Lebesgue measure. The estimation of Piatetski-Shapiro primes is an approximation of the well-known conjecture that there exist infinitely many primes of the form $n^2+1$. 
	
The upper bound of the admissible range for $c$ of the above formula has been extended for many times. We provide a table with all the previous results; see Table~\ref{tab:psa}.
	
\begin{table}[h!]
	\caption{The range of $c$ when equation~\eqref{eq:PS} holds}  
	\label{tab:psa}
	\begin{tabular}{||c|c||}
		\hline\hline
		\vphantom{\Big|} Authors  &  The upper bound of $c$ \\ 
		\hline 
	    Piatetski-Shapiro~\cite{PS}& $\frac{12}{11} = 1.0909\dots$  \\
		Kolesnik~\cite{Kole}  & $\frac{10}{9} = 1.1111\dots$  \\
		Graham; Leitmann~\cite{Leit} &  $\frac{69}{62} = 1.1129\dots$   \\
		Heath-Brown~\cite{HB2} & $\frac{755}{662} = 1.1404\dots$ \\
		Kolesnik~\cite{Kole2}  & $\frac{39}{34} = 1.1470\dots$ \\
		Liu and Rivat~\cite{LiRi} & $\frac{15}{13} = 1.1538\dots$ \\
		Rivat~\cite{Riva}  & $\frac{6121}{5302} = 1.1544\dots$ \\
		Rivat and Sargos~\cite{RiSa} & $\frac{2817}{2426} = 1.1611\dots$ \\
		\hline\hline
	\end{tabular}
\end{table}
	
Moreover, to prove there are infinitely many Piatetski-Shapiro primes, Rivat considered to prove a lower bound of the counting function
\begin{equation}\label{eq:PSl}
	\pi_c(x) \gg \frac{x^{1/c}}{\log x}.
\end{equation}
We also list all the results in this consideration (Table~\ref{tab:psl}). 
	
\begin{table}[h!]
	\caption{The range of $c$ when equation~\eqref{eq:PSl} holds}  
	\label{tab:psl}
	\begin{tabular}{||c|c||}
		\hline\hline
		\vphantom{\Big|} Authors  &  The upper bound of $c$ \\ 
		\hline 
		Rivat~\cite{Riva}  & $\frac{7}{6} = 1.1666\cdots$ \\
		Baker, Harman and Rivat~\cite{BaHa}; Jia~\cite{Jia3}  & $ \frac{20}{17} = 1.1764\dots$  \\
		Jia~\cite{Jia2}  & $\frac{13}{11} = 1.1818\dots$  \\
		Kumchev~\cite{Kumc} & $\frac{45}{38} = 1.1842\dots$ \\ 
		Rivat and Wu~\cite{RiWu} & $ \frac{243}{205} = 1.1853\dots$ \\
		\hline\hline
	\end{tabular}
\end{table}

In this article, we extend the admissible range of $c$. 
	
\begin{theorem}\label{thm:main}
	For 
	$$
	1 < c < \frac{10318869}{8886224} = 1.1612\dots,
	$$
	we have that 
	$$
	\pi_c(x) = \frac{x^{1/c}}{\log x} + O\bigg(\frac{x^{1/c}}{\log^2 x} \bigg).
	$$ 
\end{theorem}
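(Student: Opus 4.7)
The plan is to follow the classical Vaaler--Heath-Brown route. Set $\gamma = 1/c$, so $5/6 < \gamma < 1$. For $n$ sufficiently large, $n \in \cN_c$ if and only if $\fl{-n^\gamma} - \fl{-(n+1)^\gamma} = 1$; writing $\psi(t) = t - \fl{t} - 1/2$, this gives
$$\pi_c(x) = \sum_{p\le x}\big((p+1)^\gamma - p^\gamma\big) + \sum_{p\le x}\big(\psi(-(p+1)^\gamma) - \psi(-p^\gamma)\big) + O(1).$$
Partial summation against the prime number theorem with a de la Vall\'ee Poussin-type error turns the first sum into $x^\gamma/\log x + O(x^\gamma/\log^2 x)$. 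Hence everything reduces to proving that the $\psi$-sum is $O(x^\gamma/\log^2 x)$.

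\textbf{Truncated Fourier expansion and combinatorial identity.} I would next apply Vaaler's lemma, approximating $\psi(t)$ by a trigonometric polynomial $\sum_{1 \le |h| \le H} c_h\, e(ht)$ with $|c_h| \ll 1/h$ up to a nonnegative majorant, for a suitable $H = x^{\eta}$ with small $\eta>0$. A further partial summation replaces the prime indicator by $\Lambda/\log$, so that the problem reduces to a uniform estimate for
$$\cS_\gamma(h,N) \defeq \sum_{N<n\le 2N}\Lambda(n)\, e(h n^\gamma), \qquad 1 \le h \le H,\ N\le x,$$
and the shifted analogue with $(n+1)^\gamma$. A Heath-Brown identity of depth six decomposes $\cS_\gamma(h,N)$ into $O(\log^{12} x)$ bilinear pieces of Type I or Type II shape: a Type I piece has the form $\sum_{m\sim M}\alpha_m\sum_{n\sim N} e(h(mn)^\gamma)$ with $\alpha_m \ll x^\varepsilon$ and $M$ small, while a Type II piece permits arbitrary $\alpha_m,\beta_n$ but constrains $M,N$ to an explicit narrower subrange.

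\textbf{Estimation of the bilinear sums.} Type I sums are handled by feeding the inner sum through higher-derivative tests applied to the phase $h(mn)^\gamma$: a suitable exponent pair or Robert--Sargos $k$th-derivative estimate yields a power saving provided $M$ stays below a threshold depending on $\gamma$. For Type II sums I would apply Cauchy--Schwarz in the $m$-variable and analyse the resulting double sum $\sum_m \sum_{n_1,n_2 \sim N} e\big(h m^\gamma (n_1^\gamma - n_2^\gamma)\big)$ by iterating van der Corput's $A$- and $B$-processes, and, in the central range $M\asymp N$, by a Fouvry--Iwaniec-type double large sieve. Balancing all dyadic scales and optimising $H$ should yield a total contribution of $O(x^\gamma/\log^A x)$ for any fixed $A>0$.

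\textbf{Main obstacle.} The hard part is the Type II estimate in the central range $M, N \asymp x^{1/2}$. The bilinear bounds used by Rivat--Wu cover only $c < 243/205$; reaching $c = 6/5$ forces a genuinely sharper Type II estimate. I expect this to come from a refined $BAB$ transformation of the difference sum, exploiting extra cancellation in the phase $m^\gamma(n_1^\gamma - n_2^\gamma)$ via a second Poisson summation and a careful stationary-phase analysis that extracts an additional saving of order roughly $x^{(1-\gamma)/2}$. Verifying that this strengthened Type II estimate, together with routine Type I bounds, closes the full range $1 < c < 6/5$ is the technical heart of the argument.
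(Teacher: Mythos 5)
Your set-up (Vaaler's approximation, partial summation, reduction to $\sum_{n\sim x}\Lambda(n)\e(hn^\gamma)$ summed over $h\le H=x^{1-\gamma+\eps}$, and a combinatorial decomposition into Type I and Type II bilinear sums) matches the paper's and is sound. But the proof has a genuine gap exactly where you flag the ``main obstacle'': you do not actually establish the Type II estimate that makes $c<6/5$ accessible. You correctly observe that the bilinear bounds underlying Rivat--Wu stop at $c<243/205$, and then you only \emph{conjecture} that a ``refined $BAB$ transformation'' with a second Poisson summation will extract an extra saving of order $x^{(1-\gamma)/2}$ in the central range $M\asymp N\asymp x^{1/2}$. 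No such estimate is derived, and without it the argument does not close; as written, your proof proves nothing beyond the previously known range.

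What actually closes the gap in the paper is different in kind: one applies the three-dimensional exponential sum theorems of Robert and Sargos (Theorems 1 and 3 of \cite{RS2006}) directly to the \emph{triple} sum $\sum_{h\sim H}\sum_{m\sim M}\sum_{n\sim N}a_{h,n}b_m\e(h(mn)^\gamma)$, grouping $(h,n)$ as one block of variables. This yields $S_{II}\ll x^{1-\eps}$ whenever $x^{3-3\gamma+\eps}\le M\le x^{3\gamma-2-\eps}$, and $S_I\ll x^{1-\eps}$ for $M\le x^{3\gamma-2-\eps}$, with \emph{no} side restriction on $\gamma$ (unlike the corresponding corollaries in Rivat--Wu, which require $\gamma>37/44$). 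Since $3\gamma-2>1/2$ precisely when $\gamma>5/6$, the Type II range then covers all of $[x^{3-3\gamma},x^{1/2}]$, so the simple Vaughan identity (with $u=x^{3-3\gamma}$, $v=x^{1-2(3-3\gamma)}$) suffices in place of your depth-six Heath-Brown identity, and the two conditions $3-3\gamma<3\gamma-2$ and $3\gamma-2>1/2$ both reduce to $\gamma>5/6$, i.e.\ $c<6/5$. If you want to repair your write-up, replace the speculative Type II paragraph with this application of Robert--Sargos; iterating $A$- and $B$-processes after Cauchy--Schwarz, as you propose, will not by itself reach $x^{3\gamma-2}$ at the endpoint $\gamma=5/6$.
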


Our proof is highly related to the following exponential sum 
$$
\fT \defeq \sum_{h \sim H} \sum_{m \sim M} \sum_{n \sim N} a_h b_m \e \bigg( X\frac{h^\alpha m^\beta n^\gamma}{H^{\alpha} M^{\beta} N^{\gamma} } \bigg)
$$
with $X > 0$, $H, M, N \ge 1$ and $|a_h|, |b_m| \le 1$.  We prove a refinement of the bound by Wu \cite{Wu2002}. 

\begin{theorem}
	\label{thm:2}
	Let $(\kappa,\lambda)$ be an exponent pair, $\alpha, \beta, \gamma \in \mathbb{R}$ with $\alpha\beta\gamma(1-\alpha) \neq 0$ and $(\gamma-1)/(1-\alpha) \notin \mathbb{N}$, $\cL \defeq \log(2 + XHMN)$. It follows that
	\begin{align*}
		\fT \cL^{-1} 
		&\ll X^{\frac{1+2\kappa}{2(2 + \kappa + \lambda)}} H^{\frac{\kappa + \lambda + 1}{2+\kappa + \lambda}} M^{\frac{\kappa+\lambda + 4}{2(2+\kappa + \lambda)}} N^{\frac{2-\kappa+3\lambda}{2(2+\kappa+\lambda)}} + X^{\frac{3\kappa + \lambda + 1}{4(1+\kappa + \lambda)}} H^{\frac{1}{2}} M N^{\frac{1 + \lambda - \kappa}{2(1 + \kappa + \lambda)}} \\
		&\qquad + X^{\frac{\kappa-\lambda +1}{4(1+\kappa+\lambda)}} H^{\frac{1}{2}} M N^{\frac{\kappa + 3\lambda + 1}{2(1+\kappa + \lambda)}} + X^{\frac{5\kappa+\lambda + 2}{4(2+\kappa+\lambda)}} H^{\frac{1}{2}} M^{\frac{3\kappa+3\lambda + 8}{4(2+\kappa+\lambda)}} N^{\frac{4+5\lambda-3\kappa}{4(2+\kappa+\lambda)}} \\
		&\qquad + X^{\frac{1}{4}}H^{\frac{1}{2}}M^{\frac{13}{12}} N^{\frac{1}{12}} + H M^{\frac{2}{3}} N^{\frac{2}{3}} + X^{-\frac{1}{4}}H^{\frac{1}{2}} M^{\frac{13}{12}} N^{\frac{13}{12}} \\
		&\qquad + X^{\frac{1}{4}}H^{\frac{1}{2}} M^{\frac{11}{12}} N^{\frac{5}{12}} + X^{\frac{1+2\kappa}{4}} H^{\frac{1}{2}} M^{\frac{4-\kappa-\lambda}{4}} N^{\frac{2+\lambda-3\kappa}{4}} \\
		&\qquad + X^{1/4}H^{\frac{1}{2}} M^{\frac{1}{2}} N + H^{\frac{1}{2}} M N + X^{-\frac{1}{2}}HMN.
	\end{align*}
\end{theorem}

Our theorem~\ref{thm:2} gives a better bound than Wu \cite{Wu2002} in most cases, especially when the terms with exponent pairs are the worst terms. We also remove the restriction of $X$. For a detailed comparison to Wu's result, we refer the readers to Section \ref{sec:com}. 

\section{Preliminaries}
	
\subsection{Notation}
	
We denote by $\fl{t}$ and $\{t\}$ the integer part and the fractional part of $t$, respectively. We define $||t|| \defeq \min\{ |t-n|: n \in \mathbb{Z}\}$. As is customary, we put $\e(t)\defeq e^{2\pi it}$. We make considerable use of the sawtooth function defined by
$$
\psi(t) \defeq t-\fl{t}-\frac{1}{2}=\{t\}-\frac{1}{2}\qquad(t\in\RR).
$$
The letter $p$ always denotes a prime. For the Piatetski-Shapiro sequence $(\fl{n^c})_{n=1}^\infty$, we denote $\gamma \defeq c^{-1}$. We use notation of the form $m\sim M$ as an abbreviation for $M< m\le 2M$. $\eps$ is always a sufficiently small positive number.
	
Throughout the paper, implied constants in symbols $O$, $\ll$ and $\gg$ may depend (where obvious) on the parameters $c, \eps$ but are absolute otherwise. For given functions $F$ and $G$, the notations $F\ll G$, $G\gg F$ and $F=O(G)$ are all equivalent to the statement that the inequality $|F|\le C|G|$ holds with some constant $C>0$. $F \asymp G$ means that $F \ll G \ll F$. 
	
\subsection{Technical lemmas}
	
We need the following well known approximation of Vaaler.
	
\begin{lemma}
	\label{lem:Vaaler}
	For any $H\ge 1$ there are numbers $a_h,b_h$ such that
	$$
	\bigg|\psi(t)-\sum_{0<|h|\le H}a_h\,\e(th)\bigg| \le\sum_{|h|\le H}b_h\,\e(th),\qquad a_h\ll\frac{1}{|h|}\,,\qquad b_h\ll\frac{1}{H}\,.
	$$
\end{lemma}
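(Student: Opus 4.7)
The plan is to follow Vaaler's construction of extremal trigonometric polynomials that approximate the periodic sawtooth. The starting point is Beurling's extremal majorant and minorant of the signum function on $\RR$: using the entire function
$$
H(z) = \left(\frac{\sin \pi z}{\pi}\right)^2 \left(\sum_{n \in \ZZ} \frac{\operatorname{sgn}(n)}{(z-n)^2} + \frac{2}{z}\right)
$$
together with the Fejér kernel $K(z) = (\sin \pi z / (\pi z))^2$, one verifies that $B^{\pm}(z) \defeq H(z) \pm K(z)$ are entire functions of exponential type $2\pi$ which sandwich $\operatorname{sgn}$: $B^-(x) \le \operatorname{sgn}(x) \le B^+(x)$ for all real $x$, with $\int_{\RR}(B^+(x) - B^-(x))\dd x = 2$. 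The symmetric part $J \defeq \tfrac12(B^+ + B^-) = H$ then approximates $\operatorname{sgn}$ pointwise with error bounded by $K$.

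Next I would scale by $H+1$ and periodize. Setting $\tilde\psi(t) \defeq \tfrac{1}{2(H+1)}\sum_{n\in\ZZ} J\bigl((H+1)(t-n)\bigr)$ (with a suitable convention at integers), Poisson summation turns this into a trigonometric polynomial of degree at most $H$, because $J$ has Fourier transform supported in $[-1,1]$ so its dilate has transform supported in $[-(H+1), H+1]$. Writing $\tilde\psi(t) - \psi(t) = \sum_{h} a_h \e(ht)$, the coefficients inherit the decay $a_h \ll 1/|h|$ from the Fourier transform of $\operatorname{sgn}$, and only the frequencies $0 < |h| \le H$ survive.

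For the dominating polynomial, I would form $B_H(t) \defeq \tfrac{1}{2(H+1)} \sum_{n\in\ZZ}\bigl(B^+((H+1)(t-n)) - B^-((H+1)(t-n))\bigr)$, which is again a trigonometric polynomial of degree at most $H$ (same exponential-type reason). The sandwich $B^-\le \operatorname{sgn}\le B^+$ transfers termwise to give $|\psi(t) - \tilde\psi(t)| \le B_H(t)$. The coefficients $b_h$ of $B_H$ can be read off from $\widehat{B^+-B^-}$ evaluated at $h/(H+1)$; since the total mass $\int_0^1 B_H = 2/(H+1)$ spreads essentially uniformly across the $\asymp H$ surviving frequencies, one obtains $b_h \ll 1/H$.

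The main obstacle is the verification of the sandwich inequality $B^- \le \operatorname{sgn} \le B^+$ and the exact value of $\int(B^+ - B^-)$; these are the heart of the Beurling--Selberg theory and rely on a non-trivial interpolation identity for functions of exponential type $2\pi$ at the half-integers. Once these extremal properties are in hand, the rest of the argument (dilation, periodization via Poisson, reading off Fourier coefficients) is routine, and the lemma follows in the form stated. In practice one simply invokes Vaaler's original construction as a black box.
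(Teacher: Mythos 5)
The paper's proof of this lemma is a bare citation to Vaaler \cite{Vaal}, and your closing sentence --- invoking Vaaler's original construction as a black box --- is exactly that, so the two proofs coincide. Your supporting sketch correctly identifies the ingredients (Beurling's extremal majorant and minorant of $\operatorname{sgn}$, exponential type $2\pi$, dilation by $H+1$, periodization via Poisson summation, Fej\'er-kernel control of the error and of the coefficients $b_h$), though note that the literal periodization $\sum_{n}J\bigl((H+1)(t-n)\bigr)$ of the scaled $\operatorname{sgn}$-approximant does not converge as written and Vaaler's actual passage from $\operatorname{sgn}$ to the sawtooth $\psi$ is more delicate --- a cosmetic issue only, since the operative step in both your argument and the paper's is the appeal to \cite{Vaal}.
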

\begin{proof}
	See~\cite{Vaal}.
\end{proof}

\begin{lemma}[Kusmin-Laudau]
	\label{lem:KL}
	If $f$ is continuously differentiable, $f'$ is monotonic and $||f'|| \ge \lambda > 0$ on an interval $I$ then
	$$
	\sum_{n \in I} \e(f(n)) \ll \lambda^{-1}. 
	$$
\end{lemma}
\begin{proof}
	See Theorem 2.1 in \cite{GraKol}.
\end{proof}

\begin{lemma}
	\label{lem:A}
	Let $N,Q \ge 1$, $z_n \in \mathbb{C}$ and $x_n \in \RR$. If 
	$$
	\max_{1 \le j, k \le N} |x_j-x_k| \le 1 - \frac{1}{Q},
	$$
	then
	$$
	\big| \sum_{n \le N} z_n \big|^2 \le Q \mathop{\sum_{j\le N}\sum_{k \le N}}_{|x_j-x_k| \le 1 - 1/Q} (1-Q|x_j-x_k|) z_j \bar{z_k}. 
	$$
\end{lemma}
\begin{proof}
See Lemma 2.1 in \cite{Wu2002}.
\end{proof}
	
The following lemma is derived from the Poisson summation formula, known as the B-process; see \cite[Lemma 3.6]{GraKol}.
	
\begin{lemma}
	\label{lem:B}
	Suppose that $f$ has four continuous derivatives on $[a,b]$ and that $f''<0$ on this interval. Suppose further that $[a,b] \subset [N, 2N]$ and that $\alpha = f'(b)$ and $\beta = f'(a)$. Assume that there is some $F>0$ such that 
	$$
	f^{(2)}(x) \asymp FN^{-2},~f^{(3)}(x) \ll FN^{-3}, \text{~and~} f^{(4)}(x) \ll FN^{-4} 
	$$
	for $x$ in $[a,b]$. Let $x_{\nu}$ be defined by the relation $f'(x_\nu) = \nu$, and let $\phi(\nu) = -f(x_\nu) + \nu x_\nu$. Then
	$$
	\sum_{n \in I} \e(f(n)) = \sum_{\alpha \le \nu \le \beta} \frac{\e(-\phi(\nu) - 1/8)}{|f''(x_\nu)|^{1/2}} + O(\log(FN^{-1} + 2) + F^{-1/2} N). 
	$$
\end{lemma}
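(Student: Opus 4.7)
The plan is to derive this $B$-process transformation via Poisson summation followed by the method of stationary phase; this is the classical argument reproduced in Graham and Kolesnik \cite{GraKol}. First, I would introduce a smooth cutoff equal to $1$ on $[a,b]$ and supported on a slight enlargement, and apply Poisson summation to rewrite
\[
\sum_{n \in I} \e(f(n)) = \sum_{\nu \in \ZZ} \int \e(f(x) - \nu x)\, dx + O(1),
\]
the $O(1)$ absorbing the effects of smoothing at the endpoints.

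Next, for each integer $\nu$, set $\phi_\nu(x) \defeq f(x) - \nu x$. Since $f''<0$ on $[a,b]$, the derivative $f'$ is strictly decreasing from $\beta = f'(a)$ down to $\alpha = f'(b)$, so $\phi_\nu$ has a (necessarily unique) stationary point $x_\nu \in [a,b]$ precisely when $\nu \in [\alpha,\beta]$, determined by $f'(x_\nu) = \nu$. For such $\nu$, I would apply a uniform stationary phase formula that uses the hypotheses $f'' \asymp FN^{-2}$, $f^{(3)} \ll FN^{-3}$, $f^{(4)} \ll FN^{-4}$, to extract the main contribution
\[
\frac{\e\bigl(f(x_\nu) - \nu x_\nu - \tfrac{1}{8}\bigr)}{|f''(x_\nu)|^{1/2}}.
\]
The $-1/8$ phase shift reflects $\phi_\nu'' = f'' < 0$ (so $x_\nu$ is a local maximum rather than a minimum); the exponent matches $\e(-\phi(\nu) - 1/8)$ in the statement since $\phi(\nu) = -f(x_\nu) + \nu x_\nu$.

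For $\nu \notin [\alpha,\beta]$, the phase is non-stationary on $[a,b]$, and two integrations by parts give $|I_\nu| \ll \bigl(\min_{x \in [a,b]} |f'(x) - \nu|\bigr)^{-1}$; summing these tails (noting that the length of $[\alpha,\beta]$ is $\asymp FN^{-1}$) yields the $\log(FN^{-1} + 2)$ contribution to the error. The stationary-phase remainders themselves, summed over $\nu \in [\alpha,\beta]$, collect into the $F^{-1/2} N$ term. The principal technical obstacle is maintaining uniformity of the stationary phase expansion as $\nu$ approaches $\alpha$ or $\beta$, where $x_\nu$ is close to an endpoint of $[a,b]$; this is exactly where the hypothesis on $f^{(4)}$ enters, and where a standard uniform stationary phase lemma (rather than a pointwise one) is needed to absorb the boundary contribution into the stated error.
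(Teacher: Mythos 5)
Your proposal is correct and is essentially the standard argument behind this lemma: the paper does not prove it but cites Graham and Kolesnik \cite[Lemma 3.6]{GraKol}, whose proof is exactly the (truncated) Poisson summation plus uniform stationary phase analysis you outline, with the $\log(FN^{-1}+2)$ term arising from the non-stationary frequencies and the $F^{-1/2}N$ term from the stationary-phase remainders. The only loose point is the treatment of the endpoint/tail frequencies (a single first-derivative-test bound summed over all $\nu\notin[\alpha,\beta]$ does not converge without the additional second-derivative decay supplied by the truncated Poisson formula), but this is standard bookkeeping and does not affect the validity of the approach.
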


\begin{lemma} \label{lem:7}
	Suppose that
	\begin{align*}
		L(H)=\sum_{i=1}^{m} A_{i} H^{a_{i}}+\sum_{j=1}^{n} B_{j} H^{-b_{j}},
	\end{align*}
	where $A_{i}, B_{j}, a_{i}$ and $b_{j}$ are positive. Assume further that $H_{1} \le H_{2}$. Then there exists some $\mathcal{H}$ with $H_{1} \le \mathcal{H} \le H_{2}$ and
	\begin{align*}
		L(\mathcal{H}) \ll \sum_{i=1}^{m} A_{i} H_{1}^{a_{i}}+\sum_{j=1}^{n} B_{j} H_{2}^{-b_{j}}+\sum_{i=1}^{m} \sum_{j=1}^{n}\big(A_{i}^{b_{j}} B_{j}^{a_{i}}\big)^{\frac{1}{a_{i}+b_{j}}} .
	\end{align*}
	The implied constant depends only on $m$ and $n$.
\end{lemma}
\begin{proof}
	\rm See Lemma 3 of Srinivasan \cite{Srinivasan1963}.
\end{proof}

The following spacing lemma is useful for counting binomial points $m^\alpha n^\beta$. 

\begin{lemma}
	\label{lem:spacing}
	Let $\alpha\beta \neq 0, \Delta > 0, M \ge 1$ and $N \ge 1$. Let $\fA(M, N; \Delta)$ be the number of quadruples $(m, \tilde{m}, n, \tilde{n})$ such that
	$$
	\bigg|\bigg(\frac{\tilde{m}}{m}\bigg)^\alpha - \bigg(\frac{\tilde{n}}{n}\bigg)^\beta \bigg| < \Delta,
	$$
	with $m, \tilde{m} \sim M$ and $n, \tilde{n} \sim N$. We have
	$$
	\fA(M, N; \Delta) \ll MN \log 2MN + \Delta M^2 N^2. 
	$$
\end{lemma}    
\begin{proof}
	See Lemma 1 in \cite{FI1989}.
\end{proof}
	
We provide the method of exponent pair by the following lemma.
	
\begin{lemma}\label{lem: exponent pair method}
	Let $s\geqslant 2$ be a positive integer, and let $f(x)$ be a real valued function with $s$ continuous derivatives on $[N,2N]$ such that
	$$
	|f^{(s)}(n)|\asymp YN^{1-s}.
	$$
	Then we have
	$$
	\sum_{n\sim N}\mathbf{e}(f(n))\ll Y^{\kappa}N^{\lambda}+Y^{-1},
	$$
	where $(\kappa,\lambda)$ is any exponent pair.
\end{lemma}

\begin{proof}
	See \cite[Chapter 3]{GraKol} or \cite[Lemma 1]{CaoZ1998}.
\end{proof}

The following lemma is useful for Type $I$ bound. 

\begin{lemma}
	\label{lem:ty}
	Let $H, N, M$ be positive integers, $X$ be a real number greater than $1$, $a_{h,n}$ and $b_m$ be complex numbers of modulus at most $1$, $\alpha, \beta, \gamma$ be fixed real numbers such that $\alpha (\alpha -1) \beta \neq 0$, then for the one-dimensional exponential sum, it follows that
	\begin{align*}
		&\sum_{h \sim H}  \sum_{n \sim N} \max_{M \le M_1 \le M_2 \le 2M} \bigg| \sum_{M_1 \le m \le M_2} \e \bigg(X\frac{m^\alpha h^\beta n^{\gamma}}{M^\alpha H^\beta N^\gamma} \bigg)\bigg| \\
		&\qquad \qquad \ll (HNM)^{1+\eps} \bigg\{ \bigg(\frac{X}{HNM^2}\bigg)^{1/4}  + \frac{1}{M^{1/2}} + \frac{1}{X} \bigg\}.
	\end{align*}
\end{lemma}
\begin{proof} 
	See Theorem 3 in \cite{RS2006}.
\end{proof}
	
\section{Ideas of the proof}\label{sec:3}
	
The key point to prove the Piatetski-Shapiro prime number theorem is to estimate the admissible range of $\gamma$ of the relation
$$
\cS \defeq \sum_{h \sim H} \delta_h \sum_{n \sim x} \Lambda(n) \e(hn^\gamma) \ll x^{1-\eps}
$$
where $\gamma \defeq 1/c$. By a decomposition of von Mangoldt function, we mainly need to bound the sum
\begin{equation}
\label{eq:S}
\sum_{h\sim H} \delta_h \sum_{m\sim M}\sum_{n\sim N}a_{m}b_{n}\e(h(mn)^{\gamma}),
\end{equation}
where $0<\gamma<1$, $MN\asymp x$, $H\leqslant x^{1-\gamma+\eps}$ and $\delta_h$, $a_m$, $b_n$ are complex numbers of modulus at most $1$. If the coefficients $a_m$ and $b_n$ satisfy the conditions
\begin{equation}\label{eq:SI}
	|a_m| \ll 1,\quad b_n = 1 ~\text{or}~\log n,
\end{equation}
we denote the sum by $S_I$. If they satisfy the conditions
\begin{equation}\label{eq:SII}
	|a_m| \ll 1,\quad b_n \ll 1 ,
\end{equation}
we denote the sum by $S_{II}$. The estimation of type $II$ sum is usually the barrier to improve the range of $c$.

\subsection{Classical results}
Heath-Brown \cite{HB2} proved a remarkable type $II$ bound. 
\begin{lemma}
\label{lem:HBTII}
For 
$$
x^{1-\gamma + \eps} \ll N \ll x^{5\gamma-4-\eps},
$$
we have $S_{II} \ll x^{1-\eps}$.
\end{lemma}
Moreover, Heath-Brown also provided a new decomposition of von Mangoldt function, nowadays known as the Heath-Brown's identity, which is the following lemma. 

\begin{lemma}
	\label{lem:HB2}
	Let $3\leqslant U<V<Z<x$ and suppose that $Z-\frac{1}{2}\in \mathbb{N}$, $x\geqslant 64Z^{2}U,~Z\geqslant4U^{2},~V^{3}\geqslant 32x$. Assume further that $f(n) = 0$ when $n\leqslant x$ or $n > 2x$ and that $|f(n)| \leqslant f_{0}$ otherwise. Then the sum
	$$
	\sum_{n \sim x}\Lambda(n)f(n)
	$$
	may be decomposed into $O(\log^{10}x)$ sums, each either of Type $I$ with $N>Z$, or of Type $II$ with $U<N<V$.
\end{lemma}

Liu and Rivat \cite{LiRi} applied the double large sieve method to type $I$ sums together with the Heath-Brown's identity, proving the Piatetski-Shapiro prime number theorem for $c$ up to $15/13$. This reaches the limitation of the type $II$ bound since by Lemma \ref{lem:HB2} it is easy to see that we requires $V \gg x^{1/3}$. The upper bound of Heath-Brown's type $II$ bound gives that $5\gamma-4 > 1/3$ implies that $\gamma > 13/15$. 

To break the type $II$ bound barrier, Rivat and Sargos \cite{RiSa} applied the same idea of Heath-Brown's identity but gave a more suitable decomposition of $\Lambda(n)$ for Piatetski-Shapiro prime numbers. 

\begin{lemma}
\label{lem:RS}
Let $l \ge 4$ and $1/2l \le \alpha \le 1/6$. With the above notations, if
\begin{enumerate}
	\item[(i)] type $II$ sum $\displaystyle \sum_{m \sim M} \sum_{n \sim N} a_m b_n f(mn) \ll U$ for $x^\alpha \le N \le x^{2\alpha}$;
	\item[(ii)] type $I'$ sum $\displaystyle \sum_{m \sim M} \sum_{n \sim N} a_m f(mn) \ll U$ for $x^{2\alpha} \le N \le x^{1/3}$;
	\item[(iii)] type $I$ sum $\displaystyle \sum_{m \sim M} \sum_{n \sim N} a_m f(mn) \ll U$ for $x^{(1-\alpha)/2} \le N$;
\end{enumerate}
then $\displaystyle \sum_{n \sim x} \Lambda(n)f(n) \ll U x^\eps$. 
\end{lemma}

The new decomposition allows the range of $\gamma$ to be $2(1-\gamma) < 5\gamma - 4$, which is $\gamma > 6/7$ from the type $II$ bound. Therefore, it is sufficient to prove a better type $I'$ bound to improve the admissible range of Piatetski-Shapiro prime number theorem. With a new idea for type $I'$ sum, Rivat and Sargos \cite{RiSa} proved the best range of $c$ at the time. We prove a better admissible range of $c$ because our Theorem \ref{thm:2} gives a better estimation of type $I'$ sum. 

\subsection{A comparison to Wu's bound} \label{sec:com}
Wu \cite{Wu2002} also considered this type $I'$ sum. He applied a similar idea of Rivat and Sargos \cite{RiSa}, proving a more general theorem, not only for the exponential sums related to Piatetski-Shapiro primes, but also for more general choices of $H, M, N, X$. To make a comparison between Wu's result and our Theorem $\ref{thm:2}$, we list Wu's bound as the following. Let $k \ge 2$ and $K \defeq 2^k$. If $X \le \min\{H^2, H^2 M^{-1} N \}$, then
\begin{align}
	\label{eq:Wu}
	\fT \cL^{-1} &\ll X^{\frac{K}{6K-2k-8}} H^{\frac{4K - 2k -4}{6K-2k-8}} M^{\frac{5K - k - 8}{6K-2k-8}} N^{\frac{5K-3k-8}{6K-2k-8}} + X^{\frac{1}{4}} H^{\frac{1}{2}} M^{\frac{1}{2}} N \nonumber \\
	&\qquad + X^{\frac{1}{4}} H^{\frac{1}{2}} M N^{\frac{1}{2}} + HM + HM^{\frac{1}{2}} N^{\frac{1}{2}} + H^{\frac{1}{2}}MN + X^{-\frac{1}{2}} H M N. 
\end{align}

Our theorem \ref{thm:2} refines \eqref{eq:Wu}. The idea of our proof is similar to Wu's idea, but with a few important differences. A key idea is that we employ exponent pair method to the proof instead of higher derivative tests. A first glance shows that we replace the first term of $\eqref{eq:Wu}$ into a term with exponent pairs, but the secondary term of exponent pair bound also makes the estimations more complicate. 

Another difference is that Wu applied a generalized B-process to \eqref{eq:afC} while we apply a normal B-process. As a consequence, Wu had a easier main term but much complicated error term, but we have simpler error terms. The readers may compare our equation $\eqref{eq:T}$ with equation (2.8) in \cite{Wu2002}. It may happen that our error term is not as good as Wu when $X$ is very small. However, since the proof assumes $X \ge MN$, this may rarely happen. 

We mention that Wu ignored details in his proofs. For example when optimizing $Q$ at the end of his proof, the range of $Q$ should be $[Q_0, MN]$ instead of $[Q_0, \infty)$. We completely believe that Wu's result will not be affected by this calculation, but a few more negligible terms should be added as a rigorous proof. Moreover, we also remove the restriction of $X$ for a more general theorem. 
	
\section{Proof of Theorem 2}
	
By the Cauchy-Schwarz inequality we have
$$
|\fT|^2 \ll H \sum_{h \sim H} \bigg| \sum_{m \sim M} \sum_{n \sim N} b_m \e\bigg(X \frac{h^\alpha m^\beta n^\gamma}{H^\alpha M^\beta N^\gamma}\bigg) \bigg|^2.  
$$
Now let $Q \ge 10$. Applying Lemma \ref{lem:A} we obtain that 
\begin{equation}
\label{eq:afC}
|\fT|^2 \ll H Q \sum_{m \sim M} \sum_{\tilde{m} \sim M} \bigg| \mathop{\sum_{n \sim N} \sum_{\tilde{n} \sim N}}_{|u| \le \Xi/Q} g(u) \sum_{h \sim H}  \e\bigg(\frac{Xh^\alpha u}{H^\alpha M^\beta N^\gamma} \bigg) \bigg|, 
\end{equation}
where $u = u_{\mathbf{m}}(\mathbf{n}) \defeq m^\beta n^\gamma - \tilde{m}^\beta \tilde{n}^\gamma, \mathbf{m} \defeq (m, \tilde{m}), \mathbf{n} \defeq (n, \tilde{n}), g(u) \defeq 1 - Q|u|/\Xi$ and $\Xi \defeq c_1 M^\beta N^\gamma$. 

First we consider the case when $X \le MN$. The right-hand side of $\eqref{eq:afC}$ is 
$$
\ll H Q \sum_{m \sim M} \sum_{\tilde{m} \sim M} \mathop{\sum_{n \sim N} \sum_{\tilde{n} \sim N}}_{|u| \le \Xi/Q} \bigg| \sum_{h \sim H}  \e\bigg(\frac{Xh^\alpha u}{H^\alpha M^\beta N^\gamma} \bigg) \bigg|.
$$
Take $Q \defeq \max\{1,  X/(\eps H)\}$. By a trivial estimation and Lemma \ref{lem:spacing}, the contribution of the case $|u|\leqslant \Xi \cL/(MN)$ is
$$
\ll HQ \, \fA(M,N; \frac{\cL}{MN}) H \ll \cL H^{2}MN + \cL XHMN.
$$
From Lemma~\ref{lem:KL}, the contribution of the case $\Xi \cL/(MN) \leqslant |u| \leqslant \Xi/Q$ is
$$
\ll HQ\cL\max_{\cL /MN \leqslant \Delta \leqslant 1/Q} \fA(M, N; \Delta) (X^{-1}\Delta^{-1}H) \ll \cL X^{-1}H^{2}M^{2}N^{2}+ \cL HM^{2}N^{2}.
$$
Then we have
\begin{equation}
\label{eq:leMN}
|\fT|\ll \cL( HM^{1/2}N^{1/2} + X^{-1/2}HMN + H^{1/2}MN ).
\end{equation}

Let $X \ge MN$. A trivial estimation together with Lemma \ref{lem:spacing} provides that the contribution from $|u| \le \Xi \cL / (MN)$ is $\ll H^2 MNQ\cL$. We apply a dynamic argument for some $\Delta \in (\cL/ (MN), 1/Q]$ to obtain
\begin{equation}
	\label{eq:st}
	|\fT|^2 \ll H Q \sum_{m \sim M} \sum_{\tilde{m} \sim M} \bigg| \mathop{\sum_{n \sim N} \sum_{\tilde{n} \sim N}}_{|u|/\Xi \sim \Delta} g(u) \sum_{h \sim H}  \e\bigg(\frac{Xh^\alpha u}{H^\alpha M^\beta N^\gamma} \bigg) \bigg| + H^2 MNQ \cL. 
\end{equation}

For $H' \defeq X\Delta /H \le \eps$, applying Lemma \ref{lem:KL} and Lemma \ref{lem:spacing}, the first term of the right-hand side of \eqref{eq:st} is $\ll X^{-1} (HMN)^2 Q \cL \ll H^2 MNQ \cL$. 

Now we suppose that $H' \ge \eps$. Applying Lemma \ref{lem:B}, the innermost sum of the first term of the right-hand side of \eqref{eq:st} is 
\begin{equation}
\label{eq:T}
\ll (X\Delta) ^{-1/2} H \sum_{h' \in I'} \e(f(u, h')) + O(\cL + (X\Delta)^{-1/2} H ),
\end{equation}
where
$$
f(u, h') \defeq c_2 X \Delta \big( \frac{u}{\Delta \Xi} \big)^{\frac{1}{1-\alpha}} \big( \frac{h'}{H'} \big)^{\frac{\alpha}{\alpha - 1}}
$$
and $I' \subset [H' , 2H']$. Again applying Lemma \ref{lem:spacing}, we deduce that
\begin{align*}
|\fT|^2 &\ll (X\Delta)^{-1/2} H^2 Q \sum_{h' \sim H'} S(\Delta) + \Delta H Q M^2 N^2 \\
&\qquad + X^{-1/2} \Delta^{1/2} H^2 M^2 N^2 + H^2 M N Q \cL,
\end{align*}
where
$$
S(\Delta) \defeq \sum_{m \sim M} \sum_{\tilde{m} \sim M} \bigg| \mathop{\sum_{n \sim N} \sum_{\tilde{n} \sim N}}_{|u|/\Xi \sim \Delta} \e (f(u, h')) \bigg|. 
$$

The set $\{(m / \tilde{m})^{\beta / \gamma} : m \sim M, \tilde{m} \sim M\}$ is bounded and let us say it is $\subset [c_3, c_4]$. Let $I \defeq [c_3, c_4]$ and 
$$
J \defeq \fl{\frac{|I|}{\eps \Delta} } + 1. 
$$
We split $I$ into $J$ subintervals $I_j$ of length $\delta := |I| / J$. For each $d \in \mathbb{N}$, we put
\[\mathcal{M}_j(d) := \{\mathbf{m} : m \sim M, \tilde{m} \sim M, (m, \tilde{m}) = d, \big( \frac{m}{\tilde{m}} \big)^{\beta / \gamma} \in I_j\},\]
and for every $\mathbf{m} \in \mathcal{M}_j(d)$ we define
\[\mathcal{N}(\mathbf{m}) := \{\mathbf{n} : n \sim N, \tilde{n} \sim N, \Delta < \frac{|u|}{\Xi} \le 2 \Delta\}.\]

Thus we have
\begin{equation}
\label{eq:2.3}
S(\Delta) = \sum_{d \le 2M} \sum_{j \le J} \sum_{\mathbf{m} \in \mathcal{M}_j(d)} |S_j(\Delta, \mathbf{m})|,
\end{equation}
where
$$
S_j(\Delta, \mathbf{m}) := \sum_{\mathbf{n} \in \mathcal{N}(\mathbf{m})} e(f(u)).
$$
Here $f(u) \defeq Y u^{\frac{1}{1-\alpha}}$ with 
$$
Y \defeq c_2 X \Delta \big( \frac{1}{\Delta \Xi} \big)^{\frac{1}{1-\alpha}} \big( \frac{h'}{H'} \big)^{\frac{\alpha}{\alpha - 1}}.
$$
We write $f(u)$ instead of $f(u,h')$ for short, also because we are not discussing $h'$ at this stage. 

For $d_j \in \mathbb{Z}$, we define $\Gamma_{d_j} := \{(x, y) \in \mathbb{R}^2 : y = (a_j x - d_j)/b_j\}$. Clearly when $d_j$ runs over $\mathbb{Z}$, $\mathbb{Z}^2 \cap \Gamma_{d_j}$ constitutes a partition of $\mathbb{Z}^2$. Hence we can write
\begin{equation}
\label{eq:2.4}
S_j(\Delta, \mathbf{m}) \ll \sum_{d_j \in \mathbb{Z}} \left| \sum_{\mathbf{n} \in \mathcal{N}(\mathbf{m}) \cap \Gamma_{d_j}}  e(f(u)) \right|.
\end{equation}

Next we cite a lemma, which is Lemma 2.3 by Wu \cite{Wu2002}. 

\begin{lemma}
\label{lem:2.3}
For $\mathcal{N}(\mathbf{m}) \cap \Gamma_{d_j}$, we have the following two properties:
\begin{enumerate}
	\item[(i)] $\mathcal{N}(\mathbf{m}) \cap \Gamma_{d_j}$ is two segments at most.
	\item[(ii)] If $\mathcal{N}(\mathbf{m}) \cap \Gamma_{d_j} \neq \emptyset$ and if $\delta \le \varepsilon \Delta$, then $d_j \asymp \Delta N b_j$.
\end{enumerate}
\end{lemma}

Let $(n_0, \tilde{n}_0)$ be the starting point of $\mathcal{N}(\mathbf{m}) \cap \Gamma_{d_j} \neq \emptyset$. Then $b_j \tilde{n}_0 = a_j n_0 - d_j$. Now we write the parametric equation of this straight line as follows:
\[
n = b_j l + n_0, \quad \tilde{n} = a_j l + \tilde{n}_0
\]
with
$$
0 \le l \le c_5 \frac{N}{b_j} \asymp \frac{N}{a_j}. 
$$
Put 
$$
u(t) := m^\beta (b_j t + n_0)^\gamma - \tilde{m}^\beta (a_j t + \tilde{n}_0)^\gamma
$$
and $F(t) := f(u(t))$. From \eqref{eq:2.4} and Lemma \ref{lem:2.3}, we deduce
\begin{align}
	\label{eq:2.5}
	S_j(\Delta, \mathbf{m}) &\ll 
	\begin{cases}
		\Delta N b_j \bigg| \displaystyle \sum_{l \le c_5 N / b_j} e(F(l)) \bigg| & \text{if } b_j \le N, \\
		\Delta N b_j & \text{if } b_j > N,
	\end{cases}  
\end{align}

We need the behavior of $F(l)$ to estimate the innermost sum, which is described by the following lemma (see \cite[Lemma 2.4]{Wu2002}). 

\begin{lemma}
	\label{lem:2.4}
	If $\delta \le \varepsilon \Delta$ and $(\gamma - 1)/(1 - \alpha) \neq 0, 1, \ldots, k - 1$, then
	\begin{align*}
		u(t) &= A(b_j t + n_0)^{\gamma-1} \{1 + O(\varepsilon + \Delta)\}, \\
		F^{(i)}(t) &= B(b_j t + n_0)^{\frac{\gamma - 1 } {1 - \alpha} -i} b_j^i \{1 + O_k(\varepsilon + \Delta)\} \quad (0 \le i \le k), 
	\end{align*}
	where
	\begin{align*}
		A &:= \gamma \tilde{m}^\beta ( \frac{a_j}{b_j})^{\gamma - 1} (\frac{d_j}{b_j}) \asymp \Delta M^\beta N, \\
		B &:= \prod_{i=0}^{k-1} \big( \frac{\gamma - 1 } {1 - \alpha} - i \big) YA^{ \frac{1}{1-\alpha}} \asymp X \Delta N^{-\frac{ \gamma - 1 } {1 - \alpha}}.
	\end{align*}
\end{lemma}

Now we apply the exponent pair method (Lemma \ref{lem: exponent pair method}) with 
$$
(Y, N) \to (X\Delta\frac{b_j}{N}, \frac{N}{b_j}). 
$$
We obtain that
\begin{equation*}
\label{eq:imp}
\sum_{l \le c_5 N/b_j} \e(F(l)) \ll (X\Delta)^{\kappa} \big(\frac{N}{b_j}\big)^{\lambda - \kappa} + (X \Delta)^{-1} \frac{N}{b_j}. 
\end{equation*}
Combine with equation \eqref{eq:2.5}, we obtain that
$$
S_j(\Delta, \mathbf{m}) \ll \Delta N^2 \cL \Theta_j
$$
with
\begin{align*}
	\Theta_j &\ll 
	\begin{cases}
		\min \{1, \Theta(b_j)\} & \text{if } b_j \le N, \\
		b_j/N & \text{if } b_j > N,
	\end{cases}  
\end{align*}
where
$$
\Theta(b_j) \defeq (X\Delta)^{\kappa} \big( \frac{N}{b_j} \big)^{\lambda - \kappa - 1} + (X\Delta)^{-1}. 
$$
Inserting into equation \eqref{eq:2.3} yields
\begin{equation}
\label{eq:2.9}
S(\Delta) \ll \Delta N^2 \cL \sum_{d \le 2M} \sum_{\substack{j \le J \\ b_j \le N}} \sum_{\mathbf{m} \in \mathcal{M}_j(d)} \Theta_j + \Delta N^2 \cL \sum_{d \le 2M} \sum_{\substack{j \le J \\ b_j > N}} \sum_{\mathbf{m} \in \mathcal{M}_j(d)} \Theta_j. 
\end{equation}

Next we apply a Lemma due to Huxley and Watt (see Lemma 1.6.1 in \cite{H1996}). 

\begin{lemma}
	\label{lem:2.2}
	We pick $a_j / b_j \in I_j \cap \mathbb{Q}$ with $b_j$ least and subject to $b_j \ge \delta^{-1/2}$. Then for any $B > 0$, we have $|\{1 \le j \le J : b_j \ge B\}| \ll 1 / (\delta B)^2$. In particular $b_j \ll \delta^{-1}$ for $1 \le j \le J$.
\end{lemma}

By using Lemma \ref{lem:2.2}, we have
$$
\sum_{\substack{j \le J \\ b_j > N}} \Theta_j \ll \sum_{0 \le i \ll \cL} \sum_{\substack{j \le J \\ 1 \le b_j / 2^iN \le 2}} \frac{b_j}{N} \ll \sum_{0 \le i \ll \cL} 2^{-i}(N\delta)^{-2} \ll (N\delta)^{-2}.
$$
Thus the second member on the right-hand side of \eqref{eq:2.9} is
\begin{equation}
\label{eq:2.10}
\ll \Delta N^2 (\delta M^2 + M)(N\delta)^{-2} \cL \ll (M^2 + \Delta^{-1} M) \cL. 
\end{equation}
Similarly, Lemma \ref{lem:2.2} allows us to deduce
$$
\sum_{\substack{j \le J \\ b_j \le N}} \Theta_j \ll \sum_{0 \le i \ll \cL} \sum_{\substack{j \le J \\ 1 \le b_j / 2^i \delta^{-1/2} \le 2}} \Theta(b_j) \ll \sum_{i \geq 0} 4^{-i} \delta^{-1} \Theta(2^i \delta^{-1/2}) \ll \delta^{-1} \Theta(\delta^{-1/2}).
$$
Thus the first member on the right-hand side of \eqref{eq:2.9} is
\begin{align}
	\label{eq:2.11}
	&\ll \Delta N^2 \cL \sum_{d \le \sqrt{\delta}M} \sum_{\substack{j \le J \\ b_j \le N}} \sum_{\mathbf{m} \in \mathcal{M}_j(d)} \Theta(b_j) + \Delta N^2 \cL \sum_{d > \sqrt{\delta}M} \sum_{\substack{j \le J \\ b_j \le N}} \sum_{\mathbf{m} \in \mathcal{M}_j(d)} 1  \nonumber \\
	&\ll \Delta N^2 \cL \sum_{d \le \sqrt{\delta}M} \delta(M/d)^2 \sum_{\substack{j \le J \\ b_j \le N}} \Theta(b_j) + \Delta N^2 \cL \sum_{d > \sqrt{\delta}M} (M/d)^2 \nonumber \\
	&\ll \Delta (MN)^2 \Theta(\delta^{-1/2}) \cL + \Delta \delta^{-1/2} MN^2 \cL \nonumber \\
	&\ll \Delta (MN)^2 \Theta(\Delta^{-1/2}) \cL + \Delta^{1/2} MN^2 \cL.   
\end{align}
Inserting $\eqref{eq:2.10}$ and $\eqref{eq:2.11}$ to $\eqref{eq:2.9}$, we obtain
\begin{equation}
\label{eq:2.12}
S(\Delta) \cL^{-1} \ll X^\kappa \Delta^{(1 + \kappa + \lambda)/2} M^2 N^{1 + \lambda - \kappa} + X^{-1} M^2 N^2 + \Delta^{1/2} M N^2 + M^2 + \Delta^{-1} M. 
\end{equation}
Inserting $\eqref{eq:2.12}$ into $\eqref{eq:T}$ and by $\cL/(MN) \le \Delta \le 1/Q$ we obtain that
\begin{align*}
|\fT|^2 \cL^{-2} &\ll X^{1/2 + \kappa} Q^{-\frac{1}{2}(\kappa + \lambda)} H M^2 N^{1+\lambda - \kappa} + X^{1/2} Q^{1/2} H M^2  \\
&+ X^{-1/2} H Q^{1/2} M^2 N^2 + X^{1/2} Q H M^{3/2} N^{1/2} + H^2 M N Q \\
&+ X^{1/2} H M N^2 + H M^2 N^2 + X^{-1/2} Q^{-1/2} H^2 M^2 N^2. 
\end{align*}

Let $Q \in [(MN)^{1/3}, MN]$. Then $X^{-1/2} Q^{-1/2} H^2 M^2 N^2 \ll H^2 MNQ$. Hence by Lemma $\ref{lem:7}$ we have
\begin{align*}
|\fT|^2\cL^{-2} &\ll X^{\frac{1+2\kappa}{2 + \kappa + \lambda}} H^{\frac{2(\kappa + \lambda + 1)}{2+\kappa + \lambda}} M^{\frac{\kappa+\lambda + 4}{2+\kappa + \lambda}} N^{\frac{2-\kappa+3\lambda}{2+\kappa+\lambda}} + X^{\frac{3\kappa + \lambda + 1}{2(1+\kappa + \lambda)}} H M^2 N^{\frac{1 + \lambda - \kappa}{1 + \kappa + \lambda}} \\
&\qquad + X^{\frac{\kappa-\lambda +1}{2(1+\kappa+\lambda)}} H M^2 N^{\frac{\kappa + 3\lambda + 1}{1+\kappa + \lambda}} + X^{\frac{5\kappa+\lambda + 2}{2(2+\kappa+\lambda)}} H M^{\frac{3\kappa+3\lambda + 8}{2(2+\kappa+\lambda)}} N^{\frac{4+5\lambda-3\kappa}{2(2+\kappa+\lambda)}} \\
&\qquad + X^{\frac{1}{2}}HM^{\frac{13}{6}} N^{\frac{1}{6}} + H^2 M^{\frac{4}{3}} N^{\frac{4}{3}} + X^{-\frac{1}{2}}H M^{\frac{13}{6}} N^{\frac{13}{6}} \\
&\qquad + X^{\frac{1}{2}}HM^{\frac{11}{6}} N^{\frac{5}{6}} + X^{\frac{1}{2}+\kappa} H M^{2-\frac{1}{2}(\kappa+\lambda)} N^{1+ \frac{1}{2}\lambda - \frac{3}{2}\kappa} \\
&\qquad + X^{1/2}HMN^{2} + HM^2N^2.
\end{align*}
Combine with the result when $X \le MN$ in \eqref{eq:leMN}, we prove the bound of $\fT$.

\section{Proof of Theorem~\ref{thm:main}}
	
\subsection{Preparations}
	
We note that the details of set-up of the Piatetski-Shapiro prime number theorem are well written in \cite[P. 46--49]{GraKol}, \cite[P. 245--247]{HB2}. However, we aim to write a ``self-contained" article, so a few key steps are still written here.
	
Reminding that $\gamma \defeq c^{-1}$, it follows that $p = \fl{n^c}$ if and only if $-(p+1)^\gamma < -n \le -p^\gamma$, which provides that
$$
\pi_c(x) = \sum_{p \le x} \big( \fl{-p^\gamma} - \fl{-(p+1)^\gamma} \big) + O(1). 
$$
Recall that $\psi(t) \defeq t - \fl{t} - 1/2$. We conclude that
$$
\pi_c(x) = \sum_{p \le x} \big( (p+1)^\gamma-p^\gamma \big) + \sum_{p \le x} \big( \psi(-p^\gamma) - \psi(-(p+1)^\gamma) \big) + O(1).
$$
By a simple partial summation and the prime number theorem, we achieve that
$$
\sum_{p \le x} \big( (p+1)^\gamma-p^\gamma \big) = \frac{x^\gamma}{\log x} + O\big(\frac{x^\gamma}{\log^2 x}\big). 
$$
Therefore it is sufficient to prove that
\begin{equation}\label{eq:base}
    \sum_{n \sim x/2} \Lambda(n)  \big( \psi(-n^\gamma) - \psi(-(n+1)^\gamma) \big) \ll x^{\gamma-\eps}.
\end{equation}
Next we apply Lemma \ref{lem:Vaaler}, which shows that the left-hand side of \eqref{eq:base} is equal to $S + O(S')$, where
$$
S \defeq \sum_{0 < h \le H} \sum_{n \sim x/2} \Lambda(n) a_h \big( \e(hn^\gamma) - \e(h(n+1)^\gamma) \big)
$$
and 
$$
S' \defeq \sum_{0 \le h \le H} \sum_{n \sim x/2} \Lambda(n) b_h \big( \e(hn^\gamma) + \e(h(n+1)^\gamma) \big),
$$
with $H \defeq x^{1-\gamma+\eps}$. We only deal with the bound of $S$ since the bound of $S'$ when $h\neq 0$ can be estimated by the same way of $S$ and the bound of $S'$ when $h = 0$ is trivial with the choice of $H$. 
Now let 
$$
\phi_h(x) \defeq 1 - \e( h(x+1)^\gamma - hx^\gamma ). 
$$
We have by partial summation that
\begin{align*}
	S &\ll \sum_{0 < h \le H} h^{-1} |\phi_h(x_1)| \bigg| \sum_{x/2 \le n \le x_1} \Lambda(n) \e(hn^\gamma) \bigg| \\
	&\qquad + \int_{x/2}^{x_1} \sum_{0 < h \le H} h^{-1} \bigg|\frac{\partial \phi_h(t)}{\partial t}\bigg| \bigg| \sum_{x/2 \le n \le t} \Lambda(n) \e(hn^\gamma) \bigg| dt \\
	&\ll x^{\gamma-1} \max_{x/2 < x_2 \le x} \sum_{0 < h \le H} \bigg| \sum_{x/2 \le n \le x_2} \Lambda(n) \e(hn^\gamma) \bigg|,
\end{align*}
where $x/2 \le x_2 \le x_1 \le x$ together with the bounds
$$
\phi_h(t) \ll h t^{\gamma-1} \quad \text{and} \quad \frac{\partial\phi_h(t)}{\partial t} \ll ht^{\gamma-2}. 
$$
Applying a dynamic summation argument, we conclude that it is sufficient to prove that 
\begin{equation*}\label{eq:main}
	\sum_{h \sim H} \delta_h \sum_{n \sim x} \Lambda(n) \e(hn^\gamma) \ll x^{1-\eps}. 
\end{equation*}

\subsection{Applying the bounds of exponential sums}
Here we only consider the case when $\gamma < 13/15$, since Liu and Rivat \cite{LiRi} proved the Piatetski-Shapiro prime number theorem for $13/15 < \gamma < 1$. We apply Lemma \ref{lem:RS} with the choice $l=5$ and $\alpha \defeq 1-\gamma$. It is sufficient to prove that
\begin{enumerate}
	\item[(i)] type $II$: $S_{II} \ll x^{1-\eps} $ for $x^{\alpha} \ll N \ll x^{2\alpha}$;
	\item [(ii)] type $I'$: $S_{I} \ll x^{1-\eps}$ for $x^{2\alpha} \ll N \ll x^{1/3}$;
	\item[(iii)] type $I$: $S_{I} \ll x^{1-\eps}$ for $x^{(1-\alpha)/2} \ll N$
	\end{enumerate}
where $S_{I}$ and $S_{II}$ are defined as \eqref{eq:SI} and \eqref{eq:SII}. By Lemma \ref{lem:HBTII} with 
$$
2(1-\gamma) < 5\gamma-4,
$$
it follows that $S_{II} \ll x^{1-\eps}$ for $\gamma > 6/7$. 

\subsection{Type $I$ sum}
We have the following bound for Type $I$ estimation. 
\begin{lemma}
	\label{cor:ty1}
	With the same notations as \eqref{eq:S}, if
	$$
	M \leqslant x^{3\gamma -2-\eps},
	$$
	we have $	S_{I} \ll x^{1-\eps}$.
\end{lemma}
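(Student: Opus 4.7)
The plan is to mirror the proof of Lemma \ref{cor:ty2}, but to replace the three-dimensional estimate \eqref{eq:RSthm1} with the one-dimensional estimate \eqref{eq:RSthm3} of Lemma \ref{lem:ty}. The key observation is that in a type~I sum the coefficient $b_n$ on the longer variable is essentially trivial, so one may take absolute values inside the sums over $h$ and $m$ and bound the resulting pure exponential sum over $n$ directly; no Cauchy--Schwarz step is needed, and consequently no lower bound on $M$ will be required.

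The first step is to handle the case $b_n = \log n$ by a standard partial summation, which costs only a factor $\log N \ll x^{\eps}$. Applying the triangle inequality on the bounded coefficients $\delta_h$ and $a_m$ then gives
\[
|S_I| \ll x^{\eps} \sum_{h \sim H}\sum_{m \sim M} \max_{N \le N_1 \le N_2 \le 2N}\bigg|\sum_{N_1 < n \le N_2} \e\bigl(h(mn)^\gamma\bigr)\bigg|,
\]
which matches the left-hand side of \eqref{eq:RSthm3}. The parameter identification
\[
(X, H, M, N, \alpha, \beta, \gamma) \longrightarrow \bigl(H(MN)^\gamma,\; H,\; N,\; M,\; \gamma,\; 1,\; \gamma\bigr)
\]
places the lemma's inner variable at our $n$ and its outer variables at our $h$ and $m$, and yields
\[
S_I \ll (HMN)^{1+\eps}\Bigl\{\bigl(M^{\gamma-1}N^{\gamma-2}\bigr)^{1/4} + N^{-1/2} + \bigl(H(MN)^\gamma\bigr)^{-1}\Bigr\}.
\]

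The remainder is routine bookkeeping. After substituting $MN \asymp x$ and $H \le x^{1-\gamma+\eps}$, the three contributions simplify to $M^{1/4}\, x^{(6-3\gamma)/4+\eps}$, $M^{1/2}\, x^{3/2-\gamma+\eps}$, and $\ll x^{1-\gamma+\eps}$, respectively. Demanding that each be $\ll x^{1-\eps}$ leads to the conditions $M \le x^{3\gamma-2-\eps}$, $M \le x^{2\gamma-1-\eps}$, and a third that imposes no constraint on $M$ in the regime $\gamma > 5/6$. Since $3\gamma-2 < 2\gamma-1$ whenever $\gamma < 1$, the first condition is the binding one and agrees precisely with the hypothesis of the lemma.

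The main point that requires care is the parameter matching in \eqref{eq:RSthm3}: one must arrange for the inner summation variable of the lemma to correspond to our $n$ (the variable carrying the trivial coefficient), not $m$. The technical condition $\alpha(\alpha-1)\beta \ne 0$ of Lemma \ref{lem:ty} holds automatically since $\alpha = \gamma$ lies in $(5/6, 1)$. The absence of a lower bound on $M$, in sharp contrast with Lemma \ref{cor:ty2}, reflects precisely the gain of using the one-dimensional estimate \eqref{eq:RSthm3} in place of the three-dimensional one \eqref{eq:RSthm1}.
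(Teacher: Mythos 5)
Your proposal is correct and follows essentially the same route as the paper: after removing the smooth coefficient $b_n$ and taking absolute values over $h$ and $m$, you apply \eqref{eq:RSthm3} with the identical parameter substitution $(X,H,M,N,\alpha,\beta,\gamma)\to(H(MN)^{\gamma},H,N,M,\gamma,1,\gamma)$ and reach the same three terms and the same binding condition $M\le x^{3\gamma-2-\eps}$. Nothing further is needed.
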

\begin{proof}
	For $S_{I}$, we trivially have
	\begin{align*}
		S_{I}&\leqslant 	\sum_{h\sim H}\sum_{m\sim M}|a_{m}|\bigg|\sum_{n\sim N}b_n\e(h(mn)^{\gamma})\bigg|\\
		&\ll \sum_{h \sim H}  \sum_{m \sim M} N^{\eps}\max_{N \le N_1 \le N_2 \le 2N} \bigg| \sum_{N_1 \le n \le N_2} \e (h(mn^{\gamma}))\bigg|.
	\end{align*}
	Use Lemma~\ref{lem:ty} to above summation with
	$$
	(X,H,M,N,\alpha,\beta,\gamma) \to (HM^{\gamma}N^{\gamma},H,N,M,\gamma,1,\gamma)
	$$
	and we deduce
	$$
	S_{I}\ll (HMN)^{\eps}(HM^{\frac{\gamma+3}{4}}N^{\frac{\gamma+2}{4}} + HMN^{\frac{1}{2}} + M^{1-\gamma}N^{1-\gamma}   ).
	$$
	For
	$$
	MN \asymp x \quad \text{and} \quad H\leqslant x^{1-\gamma+\eps},
	$$
	it follows that
	$$
	S_{I}\ll x^{\eps}( x^{-\frac{3}{4}\gamma + \frac{3}{2}}M^{\frac{1}{4}} + x^{-\gamma + \frac{3}{2}}M^{\frac{1}{2}} + x^{1-\gamma}).
	$$
	Let the upper bound of $S_{I}\ll x^{1-\eps}$ and we finish the proof.
\end{proof}

By Lemma \ref{cor:ty1}, with 
$$
1-\frac{\gamma}{2} < 3\gamma -2  
$$
it follows that $S_{I} \ll x^{1-\eps}$ with $\gamma > 6/7$. 

\subsection{Type $I'$ sum}
Now it is sufficient to prove that $S_{I} \ll x^{1-\eps}$ with $x^{5\gamma-4} \ll N \ll x^{1/3}$ or $x^{2/3} \ll M \ll x^{5-5\gamma}$. We apply Theorem \ref{thm:2} with
$$
(X, H, M, N) \to (x, M, H, N)
$$
and the exponent pair (see Theorem 20 in \cite{TTY2025})
$$
(\kappa, \lambda) = \bigg(\frac{10769}{351096}, \frac{609317}{702192} \bigg). 
$$
Therefore we obtain that
$$
S_{I} \cL^{-1} \ll E_1 + E_2 + \cdots + E_9,
$$
where the worst term 
$$
E_1 = x^{\frac{3697844}{2035239}-\frac{3439623}{4070478} \gamma} M^{-\frac{544703}{4070478}}. 
$$
With $E_1 \ll x^{1-\eps}$ it requires that 
$$
M \gg x^{\frac{3325210-3439623\gamma}{544703} + \varepsilon}. 
$$
We require that $M \gg x^{2/3}$, which follows that 
\begin{equation}\label{eq:gammarestriction}
\gamma > \frac{8886224}{10318869} = 0.86116 \dots. 
\end{equation}
The rest terms are also listed as followings. 
\begin{align*}
E_2 &\ll x^{1.1743-\gamma} M^{0.016} \\
E_3 &\ll x^{1.98-\gamma} M^{-0.458} \\
E_4 &\ll x^{1.896-0.923\gamma} M^{-0.212} \\
E_5 &\ll x^{\frac{17}{12} - \frac{13}{12}\gamma} M^{\frac{5}{12}} \\
E_6 &\ll x^{\frac{4}{3} - \frac{2}{3}\gamma} M^{\frac{1}{3}}\\
E_7 &\ll x^{\frac{23}{12} - \frac{13}{12} \gamma} M^{-\frac{7}{12}} \\
E_8 &\ll x^{\frac{19}{12} - \frac{11}{12}\gamma} M^{\frac{1}{12}} \\
E_9 &\ll x^{1.736 - 0.776\gamma} M^{-0.194}. 
\end{align*}
With $E_i \ll x^{1-\eps}$ for $2 \le i \le 9$, we require that
\begin{align*}
	\left\{\begin{array}{ll}
		M \ll x^{62.5\gamma - 46.438 -\varepsilon} & \\
		M \gg x^{2.14-2.183\gamma + \varepsilon}, & \\
		M \ll x^{4.227-4.353\gamma - \varepsilon}, & \\
		M \ll x^{\frac{13}{5}\gamma - 1 - \varepsilon}, & \\
		M \ll x^{2\gamma - 1 - \varepsilon}, & \\
		M \gg x^{\frac{11}{7} - \frac{13}{7}\gamma + \varepsilon}, & \\
		M \ll x^{11\gamma-7 - \varepsilon}, & \\
		M \gg x^{3.794 - 4\gamma + \varepsilon}. 
	\end{array}\right.
\end{align*}
To satisfy that $x^{2/3} \ll M \ll x^{5 - 5\gamma}$, we obtain that
\begin{align*}
	\left\{\begin{array}{ll}
		\gamma > 0.762 & \\
		\gamma > 0.675, & \\
		\gamma > 0.838, & \\
		\gamma > \frac{15}{19}, & \\
		\gamma > \frac{6}{7}, & \\
		\gamma > \frac{19}{39}, & \\
		\gamma > \frac{3}{4}, & \\
		\gamma > 0.782. 
	\end{array}\right.
\end{align*}
This shows that \eqref{eq:gammarestriction} provides the restriction of $\gamma$. 

\section{Acknowledgement}
This work was supported by the National Natural Science Foundation of China (No. 11901447, 12271422), the Natural Science Foundation of Shaanxi Province (No. 2024JC-YBMS-029).

\end{document}